\newcommand{\livf}{\accentset{\leftarrow}}
\newtheorem*{theorem*}{Theorem}
\newtheorem{theorem}{Theorem}
\theoremstyle{remark}
\newtheorem{remark}[theorem]{Remark}
\newtheorem{example}[theorem]{Example}
\newcommand{\I}{\operatorname{i}}
\renewcommand{\Re}{\operatorname{Re}}
\newcommand{\Span}{\operatorname{span}}  %\LaTex already uses \span
\newcommand{\C}{\mathbb{C}}
\newcommand{\frk}[1]{\mathfrak{#1}}
\title{CR embeddings of nilpotent Lie groups}
\author{M. G. Cowling}
\address{School of Mathematics and Statistics\\
University of New South Wales\\
UNSW Sydney NSW 2052\\
Australia}
\email{m.cowling@unsw.edu.au}
\author{M. Ganji}
\address{School of Science and Technology\\
University of New England\\
Armidale NSW 2351\\
Australia}
\email{mganjia2@une.edu.au}
\author{A. Ottazzi}
\address{School of Mathematics and Statistics\\
University of New South Wales\\
UNSW Sydney NSW 2052\\
Australia}
\email{a.ottazzi@unsw.edu.au}
\author{G. Schmalz}
\address{School of Science and Technology \\
University of New England \\
Armidale NSW 2351 \\
Australia}
\email{schmalz@une.edu.au}
\thanks{The first and third-named authors were supported by the Australian Research Council grant DP220100285}
\subjclass[2020]{Primary: 32V15; Secondary: 22E25}
\keywords{CR manifold, CR embedding, nilpotent Lie group}
\begin{document}
\begin{abstract}
In this note we show that a connected, simply connected nilpotent Lie group with an integrable left-invariant complex structure on a generating and suitably complemented subbundle of the tangent bundle admits a CR embedding in complex space  defined by polynomials. We also show that a similar conclusion holds on suitable quotients of nilpotent Lie groups. Our results extend the CR embeddings constructed by Naruki in 1970. In particular, our generalisation to quotients allows us to see a class of Levi degenerate CR manifolds as quotients of nilpotent Lie groups.
\end{abstract}

\maketitle
\section{Introduction and preliminaries}

We deal throughout with connected, simply connected nilpotent Lie groups $G$ with integrable left-invariant complex structures on certain subbundles of the tangent bundle.

We suppose that $\frk{h}$ is a \emph{horizontal subspace} of the Lie algebra $\frk{g}$ of $G$ with an almost complex structure $J$.
By this, we mean that $\frk{h}$ generates $\frk{g}$ and that there is an ideal $\frk{n}$ in $\frk{g}$ such that
\[
\frk{g}= \frk{h} \oplus \frk{n}.
\]
The Lie algebra $\frk{g}$ and corresponding group are called stratified if 
$$
\mathfrak g = \sum_{j=1}^s \mathfrak g_j,
$$
a vector space direct sum with $[\mathfrak g_1,\mathfrak g_j]=\mathfrak g_{j+1}$ for every $1\leq j <s$.
In many interesting examples, 
%$\frk{n}$ is the commutator algebra $[\frk{g},\frk{g}] \subseteq \frk{n}$ and $\frk{g}/\frk{n}$ is abelian.
the Lie algebra $\mathfrak g$ is stratified and $\mathfrak h = \mathfrak g_1$.
We further suppose that $\frk{h}$ carries an almost complex structure $J$, which implies that ${\rm dim}(\frk{h})$ is even.

%{\color{red} Throughout the paper, we will also assume that $\frk{n}$ is commutative.}

The exponential mapping $\exp: \frk{g} \to G$ is a bijection.
We identify $\frk{g}$ with the tangent space to $G$ at the identity $e$ in the usual way,
and write $\livf{X}$ 
%and $\rivf{X}$ 
for the left-invariant 
%and right-invariant 
vector fields that coincide with $X$ at $e$,  that is,
\[
\livf{X} f(g) = \frac{d}{dt} f(g \exp(tX)) |_{t=0}.
%\qquad\text{and}\qquad
%\rivf{X} f(g) = \frac{d}{dt} f(\exp(tX) g) |_{t=0}.
\]

%
%Take a Mal$'$tsev basis
%\[
%\{ U_1, \dots, U_{2n+k}\}
%\]
%of $\frk{g}$ such that $\{U_1, \dots , U_{2n}\} $ is a basis for $\frk{h}$ and $\{ U_{2n+1}, \dots, U_{2n+k}\}$ is a basis of $\frk{n}$; then the commutators $[U_i, U_j]$ lie in $\Span\{ U_l, \dots, U_{2n+k}\}$, where $l = \max\{i,j\}+1$.
%We use exponential coordinates of the second kind on $G$, i.e., 
%$$
%(u_1,\dots,u_{2n},s_{2n+1},\dots,s_{2n+k})= \exp(\sum_{i=2n+1}^{2n+k} s_i U_i)\,\exp(\sum_{j=1}^{2n}u_j U_j).
%$$

Left-translations allow us to transport the subspace $\frk{h}$ of $\frk{g}$ to every point of $G$, and this defines a subbundle $HG$ of the tangent bundle $TG$, with a transported action of $J$ that induces an almost complex structure on $HG$.

We now discuss \emph{integrability}.
The general vector fields of type $(0,1)$ associated with this almost complex structure on $HG$ are, by definition, those generated as linear combinations of $a^X(\livf{X} + \I J\livf{X})$, where  $a^X$ is a complex-valued function on $G$ and $X\in \frk{h}$.
%; here we sum from $1$ to $2n$, and the representation need not be unique.
Observe that
\[
\begin{aligned}
&[a^X(\livf{X} + \I J\livf{X}), b^Y(\livf{Y} + \I J \livf{Y})] \\
&\qquad= a^X(\livf{X} b^Y + \I J\livf{X} b^Y)  (\livf{Y} + \I J\livf{Y})
 -  b^Y(\livf{Y} a^X + \I J\livf{Y} a^X)(\livf{X} + \I J\livf{X}) \\
 &\qquad\qquad\qquad + a^X b^Y[\livf{X}+ \I J\livf{X}, \livf{Y} + \I J\livf{Y}] ,
\end{aligned}
\]
and this is a vector field of type $(0,1)$ associated with the almost complex structure on $HG$ if and only if the subspace $\{ X + \I JX : X \in \frk{h} \}$ of the complexified algebra $\frk{h}_{\C}$ is in fact a subalgebra, that is, it is closed under commutation.
%the left-invariant commutators $[\livf{U}_l + \I J \livf{U}_l, \livf{U}_m + \I J\livf{U}_m]$ are vector fields of the same form for all $l$ and $m$, that is, may be written $\livf{X} + \I J \livf{X}$ for some $X \in \frk{h}$.
We assume this, or  equivalently, that
\begin{equation}\label{eq:integrable}
[{X}, J{Y}]  + [ J{X}, {Y}] = J ([ {X}, {Y}] - [J{X}, J{Y}])   \in \frk{h}
\end{equation}
whenever $X,Y\in \frk{h}$.
We write $\frk{h}^{01}$ for $\{ X + \I JX : X \in \frk{h} \}$, and note that $\frk{h}^{01}$ determines $J$ and vice versa.
A structure on a nilpotent Lie group of the form described is said to be an integrable left-invariant horizontal CR structure of type $(\frac{1}{2}{\rm dim}\mathfrak h,{\rm dim}\mathfrak n)=(n,k)$. If $\mathfrak g$ is stratified and $\mathfrak h = \mathfrak g_1$, then we say that the CR structure is homogeneous.
We fix a basis of $\mathfrak g$ as follows. We choose a basis $\{X_1,\dots,X_{2n}\}$ of $\mathfrak h$ and a basis $\{X_{2n+1},\dots,X_{2n+k}\}$ of $\mathfrak n$.  
We recall that an embedding of $M$ into $N$ of CR manifolds is said to be a CR embedding when its differential maps $(0,1)$ vector fields on $M$ to  $(0,1)$ vector fields on $N$.
\color{black}

In light of the example of the Heisenberg group as the boundary of a Siegel domain, the Lie theoretic generalisations of this example by Murakami \cite{Mur}, and the more general examples of Beloshapka \cite{Bel}, we ask whether this CR manifold admits a CR embedding into the complex space $\C^{n+k}$ as the edge of a wedge in a complex domain of the form
\begin{equation}\label{eq:domain}
\{ (x + \I y, u+\I v) \in \C^{n+k}: v_i > q_i(x,y,u) \},
\end{equation}
where each $q_i$ is a real polynomial. Here $x$ and $y$ are the coordinates corresponding to the horizontal space $\mathfrak h$, where we  defined the almost complex structure $J$. 

 If the CR structure is homogeneous, Naruki showed the path to answer our question in \cite{Naruki} and Gregorovi\v{c} made it explicit in a recent paper \cite{Gre}. However, the approach of Naruki and Gregorovi\v{c}  produce embeddings described by possibly complex polynomials, making the interpretation as the edge of a wedge of a domain as in \eqref{eq:domain} no longer possible.
% 
% domains of the form
%\begin{equation*}
%\{ (x + \I y+p(x,y), u+\I v) \in \C^{n+k}: v_i > q_i(x,y,u) \},
%\end{equation*}
% where $p_i$ and $q_j$ can be complex valued polynomials. 
 In fact, Naruki's method does not allow us to distinguish the cases where an embedding of the type \eqref{eq:domain} exists.
If the CR structure on $G$ is not homogeneous, we will generalise Naruki's strategy. 

%Moreover, we will observe that the embedding endows the target space $\C^{n+k}$ with a product.

\begin{theorem}\label{thm-embedding}
Every nilpotent Lie group with an integrable left-invariant horizontal CR structure of type $(n,k)$ admits a CR embedding $\iota$ into $\C^{n+k}$ of the form 
$$
({\bf x} + \I {\bf y}, {\bf t}) \mapsto ({\bf x} + \I {\bf y} + {\bf p}({\bf x},{\bf y}), {\bf t}+  {\bf q}({\bf x},{\bf y},{\bf t})),
$$
where ${\bf x},{\bf y}\in {\mathbb R}^n, {\bf u}\in {\mathbb R}^k$, ${\bf p}({\bf x},{\bf y})$ is a vector of $n$ (possibly complex valued) polynomials, and ${\bf q}({\bf x},{\bf y},{\bf t})$ is a vector of $k$ (possibly complex valued) polynomials.
%Moreover, if $G$ is stratified and $\mathfrak h$ coincides with its first layer, then there exists a group product on $\C^{n+k}$ such that the embedding $\Phi$ is a homomorphism\footnote{to be checked}.
\end{theorem}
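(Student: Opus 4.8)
\medskip
\noindent\emph{Strategy of proof.}
The plan is to realise the CR manifold $G$ inside its complexification. Let $\frk{g}_{\C}=\frk{g}\otimes_{\R}\C$, a complex nilpotent Lie algebra of complex dimension $2n+k$, and let $G_{\C}$ be the connected, simply connected complex Lie group with Lie algebra $\frk{g}_{\C}$; since $G$ is simply connected, the inclusion $\frk{g}\hookrightarrow\frk{g}_{\C}$ integrates to a closed embedding $G\hookrightarrow G_{\C}$. Integrability \eqref{eq:integrable} says exactly that $\frk{h}^{01}$ is a complex subalgebra of $\frk{g}_{\C}$, and I let $H^{01}$ be the corresponding connected (closed, simply connected) complex subgroup. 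Then $M:=G_{\C}/H^{01}$ is a complex manifold of complex dimension $n+k$. Since $G_{\C}$ is simply connected nilpotent, $M$ is biholomorphic to $\C^{n+k}$: the subspace $\frk{a}:=\frk{h}^{10}\oplus\frk{n}_{\C}$, with $\frk{h}^{10}=\overline{\frk{h}^{01}}$, is a complex vector space complement of $\frk{h}^{01}$ in $\frk{g}_{\C}$ (indeed a subalgebra, since $\frk{h}^{10}$ is the conjugate of $\frk{h}^{01}$ and $\frk{n}_{\C}$ is an ideal), so the product map $\exp(\frk{a})\times H^{01}\to G_{\C}$ is a diffeomorphism and furnishes a global holomorphic chart $M\cong\C^{n+k}$, with coordinates I write as $(\zeta,\omega)\in\C^{n}\times\C^{k}$, in which the projection $\pi\colon G_{\C}\to M$ is a polynomial map (all operations on a simply connected nilpotent group are polynomial in exponential coordinates). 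I take $\iota:=\pi|_{G}\colon G\to M\cong\C^{n+k}$.

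Next I would verify that $\iota$ is a CR embedding. Each of the $n+k$ components of $\iota$ is the restriction to $G$ of a function $F=f\circ\pi$ with $f$ holomorphic on $M$; such an $F$ is holomorphic on $G_{\C}$ and satisfies $F(gh)=F(g)$ for $h\in H^{01}$. Given $X\in\frk{h}$ and $g\in G$, put $Z=X+\I JX\in\frk{h}^{01}$; using $\exp(tZ)=\exp(tX)\exp(\I t\,JX)\exp(O(t^{2}))$ from Baker--Campbell--Hausdorff and the holomorphy of $F$ (so that its differential is complex linear for the complex structure of $G_{\C}$), the chain rule gives
\[
0=\frac{d}{dt}\Big|_{t=0}F\bigl(g\exp(tZ)\bigr)=\bigl(\livf{X}(F|_{G})+\I\,(J\livf{X})(F|_{G})\bigr)(g)=\bigl((\livf{X}+\I J\livf{X})(F|_{G})\bigr)(g),
\]
the first equality because $\exp(tZ)\in H^{01}$. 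Thus every component of $\iota$ is a CR function on $G$, which is precisely the statement that $\iota$ is a CR map. It is injective: if $\iota(g_{1})=\iota(g_{2})$ then $g_{2}^{-1}g_{1}\in G\cap H^{01}$; writing $g_{2}^{-1}g_{1}=\exp V$ with $V\in\frk{g}$ and using that $\exp$ is bijective on $\frk{g}_{\C}$ with $\exp(\frk{h}^{01})=H^{01}$, one gets $V\in\frk{g}\cap\frk{h}^{01}$, and this is $\{0\}$ because a real element of $\frk{h}^{01}$ also lies in $\overline{\frk{h}^{01}}=\frk{h}^{10}$, while $\frk{h}^{01}\cap\frk{h}^{10}=\{0\}$. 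The same remark at the level of tangent spaces gives $\ker d\iota_{g}=\{0\}$, so $\iota$ is an injective CR immersion.

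For the explicit form, fix the basis $X_{1},\dots,X_{2n}$ of $\frk{h}$ with $JX_{j}=X_{n+j}$ and $X_{2n+1},\dots,X_{2n+k}$ of $\frk{n}$, and parametrise $G$ by $(\mathbf{x},\mathbf{y},\mathbf{t})\in\R^{2n+k}$ via $g(\mathbf{x},\mathbf{y},\mathbf{t})=\exp\bigl(\sum_{j}(x_{j}X_{j}+y_{j}X_{n+j})\bigr)\exp\bigl(\sum_{l}t_{l}X_{2n+l}\bigr)$. Since $\iota$ is polynomial, its components are polynomials; a computation of the linear part — which, after normalising the holomorphic coordinates $(\zeta,\omega)$, is the identity — puts $\iota$ in the form $(\mathbf{x}+\I\mathbf{y},\mathbf{t})\mapsto(\mathbf{x}+\I\mathbf{y}+\mathbf{p}(\mathbf{x},\mathbf{y},\mathbf{t}),\mathbf{t}+\mathbf{q}(\mathbf{x},\mathbf{y},\mathbf{t}))$. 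The only remaining point is that $\mathbf{p}$ does not depend on $\mathbf{t}$. Here I use that $\frk{p}:=\frk{h}^{01}\oplus\frk{n}_{\C}$ is again a complex subalgebra of $\frk{g}_{\C}$ — $[\frk{h}^{01},\frk{h}^{01}]\subseteq\frk{h}^{01}$ by \eqref{eq:integrable} and the remaining brackets land in the ideal $\frk{n}_{\C}$ — so, with $P\supseteq H^{01}$ the corresponding subgroup, there is a holomorphic projection $M=G_{\C}/H^{01}\to G_{\C}/P$ which in the coordinates above is $(\zeta,\omega)\mapsto\zeta$. The composite $G\xrightarrow{\iota}M\to G_{\C}/P$ is $g\mapsto gP$; since the normal subgroup $N=\exp(\frk{n})$ of $G$ is contained in $P$, this composite factors through $G/N$, so the first $n$ coordinates of $\iota(g(\mathbf{x},\mathbf{y},\mathbf{t}))$ depend only on $g(\mathbf{x},\mathbf{y},\mathbf{t})N=\exp\bigl(\sum_{j}(x_{j}X_{j}+y_{j}X_{n+j})\bigr)N$, that is, only on $(\mathbf{x},\mathbf{y})$. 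This is the asserted normal form.

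The step I expect to be the main obstacle is not the construction of $\iota$ or the verification of the normal form, but checking that the injective immersion $\iota$ is a genuine embedding, i.e.\ proper onto its image. I would settle this by choosing the basis $X_{2n+1},\dots,X_{2n+k}$ of $\frk{n}$ adapted to a $\frk{g}$-invariant refinement of the lower central series, so that in the corresponding weighted exponential coordinates $\iota$ becomes ``unipotent triangular''; in particular, for each fixed $(\mathbf{x},\mathbf{y})$ the map $\mathbf{t}\mapsto\mathbf{t}+\Re\mathbf{q}(\mathbf{x},\mathbf{y},\mathbf{t})$ is a polynomial diffeomorphism of $\R^{k}$, which together with the injectivity already established forces $\iota$ to be a homeomorphism onto a closed submanifold of $\C^{n+k}$.
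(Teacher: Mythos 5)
Your map is the same one the paper constructs: complexify, set $H^{01}=\exp\frk{h}^{01}$, identify $G_{\mathbb C}/H^{01}$ with $\exp(\frk{h}^{10}\oplus\frk{n}_{\mathbb C})\simeq\C^{n+k}$ via the complementary subalgebra, and take $\iota=\pi|_G$ read in that chart; the polynomial nature of $\iota$ is obtained in both cases from Baker--Campbell--Hausdorff and nilpotency. Where you genuinely diverge is the verification that $\iota$ is CR. The paper transports the group law to a product $\ast$ on $M$, notes that $\iota$ is a homomorphism for $\ast$, and then proves $\iota_*(\livf{JX})=\I\,\iota_*(\livf{X})$ by comparing $\exp(tJX)$ and $\exp(\I tX)$ modulo $H^{01}$; you instead observe that each component of $\iota$ is the restriction to $G$ of a holomorphic, right-$H^{01}$-invariant function on $G_{\mathbb C}$ and is therefore killed by $\livf{X}+\I J\livf{X}$. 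This is shorter, avoids introducing $\ast$ (which the paper needs anyway for Theorem~\ref{thm-groupstructure}, but not here), and uses only the standard characterisation of CR maps into $\C^N$ by CR components. You also make explicit two points the paper leaves implicit: the independence of $\mathbf{p}$ from $\mathbf{t}$, which your intermediate quotient by $\exp(\frk{h}^{01}\oplus\frk{n}_{\mathbb C})$ handles cleanly, and properness of the injective immersion (the paper merely asserts that $\pi(G)$ is a submanifold). For the latter your triangularity idea is the right one, but note that you need the adapted, filtration-compatible basis on the $\frk{h}$-block as well as on $\frk{n}$, and compatibly with $J$; as written you only adapt the basis of $\frk{n}$, so that step is still a sketch --- though at roughly the same level of detail as the paper's own appeal to ``a careful analysis of the Baker--Campbell--Hausdorff formula.''
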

We will give an example to illustrate our method and to point out that interesting examples may arise in the non-homogeneous case. In particular, $\frk{h}^{01}$ needs no be abelian. 
Motivated by the example of the Heisenberg group, our second result is about a group structure on the target space of these embeddings.
\begin{theorem}\label{thm-groupstructure}
Let $G$ be a nilpotent group with an integrable left-invariant horizontal CR structure of type $(n,k)$.
 %a stratified Lie group and $\mathfrak h$ be the first layer of the stratification of the Lie algebra of $G$. 
 Then there exists a product on $\C^{n+k}$ such that the embedding $\iota$ is a homomorphism from $G$ into $\C^{n+k}$. In other words, $\iota$ realises $G$ as a subgroup of $\C^{n+k}$.
 %\footnote{can we say if the product is holomorphic on one side?}
\end{theorem}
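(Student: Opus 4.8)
The plan is to push forward the group structure of $G$ via the embedding $\iota$ and then verify that the resulting operation on the image extends to a polynomial group law on all of $\C^{n+k}$. Concretely, since $\exp\colon\frk g\to G$ is a bijection and we use exponential coordinates of the first (or second) kind, the group multiplication on $G$ is given in coordinates by the Baker--Campbell--Hausdorff formula, hence by polynomial maps $m\colon\R^{2n+k}\times\R^{2n+k}\to\R^{2n+k}$ (nilpotency makes the BCH series terminate). The embedding $\iota$ from Theorem \ref{thm-embedding} has the triangular form $(\mathbf x+\I\mathbf y,\mathbf t)\mapsto(\mathbf x+\I\mathbf y+\mathbf p(\mathbf x,\mathbf y),\mathbf t+\mathbf q(\mathbf x,\mathbf y,\mathbf t))$; in particular its first $n$ complex components depend only on $(\mathbf x,\mathbf y)$ and constitute a polynomial diffeomorphism of $\C^n\cong\R^{2n}$ (the linear part is the identity, the rest is higher order, so it is invertible with polynomial inverse by the nilpotent grading), and likewise the full map $\iota$ is a polynomial automorphism of $\R^{2n+k}$ with polynomial inverse $\iota^{-1}$.

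First I would define the candidate product on $\C^{n+k}$ by transport of structure: $z\cdot w := \iota\big(m(\iota^{-1}(z),\iota^{-1}(w))\big)$. Because $\iota$, $\iota^{-1}$ and $m$ are all polynomial, this product is polynomial in the real and imaginary parts of $z$ and $w$; it is automatically associative, has identity $\iota(e)=0$, and has polynomial inverses, so $(\C^{n+k},\cdot)$ is a group and $\iota$ is by construction an injective homomorphism. The remaining point is that $\iota(G)$ is genuinely a subgroup, which is immediate since $\iota(g)\cdot\iota(h)=\iota(gh)$ by definition.

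The step I expect to be the main obstacle — and the thing actually worth checking — is that the product is well defined and polynomial on the whole of $\C^{n+k}$, i.e. that $\iota^{-1}$ really is polynomial rather than merely real-analytic. This is where the structure from Theorem \ref{thm-embedding} is essential: the triangularity of $\iota$ together with the strict upper-triangularity coming from the nilpotent lower central series (each $\mathbf p,\mathbf q$ has no constant or relevant linear term in the "later" variables) lets one invert $\iota$ layer by layer by back-substitution, each step solving for one block of variables as a polynomial in the others. I would make this precise by filtering $\frk g=\frk h\oplus\frk n$ by the descending central series and checking that $\iota$ respects the induced filtration up to strictly higher-order terms, so that Gaussian-type elimination terminates after finitely many steps and yields $\iota^{-1}\in\mathrm{Poly}$. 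Once that is in hand, the rest is formal, and one can additionally remark that the Lie algebra of $(\C^{n+k},\cdot)$ is isomorphic to $\frk g$, so the construction recovers $G$ as a polynomially embedded, polynomially defined subgroup of affine space, exactly as in the Heisenberg/Siegel model that motivates the statement.
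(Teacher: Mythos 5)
There is a genuine gap, and it is a dimensional one. The map $\iota$ sends $G\cong\R^{2n+k}$ into $M\cong\C^{n+k}\cong\R^{2n+2k}$: the last $k$ coordinates $\mathbf t+\mathbf q(\mathbf x,\mathbf y,\mathbf t)$ are genuinely complex-valued, so $\iota(G)$ is a real submanifold of real codimension $k$, and $\iota$ is certainly not ``a polynomial automorphism of $\R^{2n+k}$'' with a globally defined polynomial inverse on the target. Consequently your transport-of-structure formula $z\cdot w:=\iota\bigl(m(\iota^{-1}(z),\iota^{-1}(w))\bigr)$ only makes sense for $z,w\in\iota(G)$; it equips the image with a group law isomorphic to that of $G$ (which is essentially tautological), but it does not produce a product \emph{on all of} $\C^{n+k}$, which is what the theorem asserts. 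The same confusion surfaces in your closing remark: a Lie group structure on $\C^{n+k}$ has a Lie algebra of real dimension $2n+2k$, so it cannot be isomorphic to $\frk g$. Your worry about whether $\iota^{-1}$ is polynomial is therefore a red herring; the real obstacle is surjectivity, not invertibility of the triangular form.

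The paper gets around this by working with the complexified group $G_{\C}$ (real dimension $2(2n+k)$) and the surjection $\Phi=\Psi\circ\pi:G_{\C}\to M$ obtained by quotienting by $H^{01}$ (real dimension $2n$), so that $M$ has exactly the dimension of $\C^{n+k}$. Fixing a right inverse $\Phi_R^{-1}$ of $\Phi$, the product is defined on all of $M$ by $m\ast m':=\Phi\bigl(\Phi_R^{-1}(m)\,\Phi_R^{-1}(m')\bigr)$, i.e.\ the multiplication of $G_{\C}$ is pushed down through the section. Since $G\cap H^{01}=\{e\}$, the restriction $\iota=\Phi\big|_G$ is injective and satisfies $\iota(gg')=\iota(g)\ast\iota(g')$, which is the content of the theorem. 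If you want to repair your argument along your own lines, you must replace $\iota^{-1}$ by a section of a surjection from a larger group onto $\C^{n+k}$; the complexification is the natural candidate, and you would still need to check (or fix a choice making) the pushed-down product well defined and associative, which is where the specific choice of $\Phi_R^{-1}$ enters.
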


Under the hypotheses of the theorem above,  suppose further that $P\subset G$ is a subgroup such that its Lie algebra satisfies $\mathfrak p\subset \frk{n}$. 
In this case the CR-structure on $G$ projects to a CR-structure on the quotient $P\backslash G$ of the same dimension, because the intersection $\mathfrak p\cap \mathfrak h=\{0\}$. 
\begin{theorem}\label{thm-quotients}
Every CR-embedding of $G$ into $\C^{n+k}$ projects to an embedding of $P\backslash G$ into $\C^{n+k'}$, with $k'=k-{\rm dim}P$.
\end{theorem}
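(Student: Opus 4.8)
The plan is to combine the group structure of Theorem~\ref{thm-groupstructure} with a single polynomial projection $\C^{n+k}\to\C^{n+k'}$. After composing $\iota$ with a translation of $\C^{n+k}$, which preserves the stated form, we may assume $\iota(e)=0$. Write $\mathbf z\in\C^{n}$ and $\mathbf w\in\C^{k}$ for the two blocks of coordinates on $\C^{n+k}=\C^{n}\times\C^{k}$. By Theorem~\ref{thm-groupstructure} there is a polynomial group law $*$ on $\C^{n+k}$ making $\iota$ an injective homomorphism onto a closed subgroup, so $\iota$ carries the left coset $Pg$ onto $\iota(P)*\iota(g)$; thus $g,g'$ lie in the same left $P$-coset if and only if $\iota(g),\iota(g')$ lie in the same left $\iota(P)$-coset. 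It is therefore enough to produce a polynomial map $\pi\colon\C^{n+k}\to\C^{n+k'}$, holomorphic in the standard coordinates, that is a submersion and is constant on every left coset of $\iota(P)$: then $\pi\circ\iota$ factors through the quotient $q\colon G\to P\backslash G$, say $\pi\circ\iota=\bar\iota\circ q$, and $\bar\iota$ will be the desired embedding.

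To build $\pi$, order the basis of $\frk{n}$ so that its last $d:=\dim P$ vectors span $\frk{p}$, and split the vertical coordinate accordingly as $\mathbf w=(\mathbf w',\mathbf w'')$ with $\mathbf w''\in\C^{d}$, identifying $\C^{n+k'}=\C^{n}\times\C^{k'}$ with the $(\mathbf z,\mathbf w')$-subspace. Since $\frk{p}\subseteq\frk{n}$ and $\iota(e)=0$, the explicit form of $\iota$ shows that $\iota(P)\subseteq\{\mathbf 0\}\times\C^{k}$, so the cosets $\iota(P)*\iota(g)$ foliate $\C^{n+k}$ by $d$-dimensional totally real leaves. Because $*$ is polynomial and triangular with respect to the splitting $\C^{n}\times\C^{k}$ (and a compatible filtration of $\C^{k}$), left translation by $\iota(P)$ fixes $\mathbf z$ and moves $\mathbf w$ by a polynomial amount, so this foliation is polynomially trivial; the plan is to write down an independent family of $n+k'$ polynomial first integrals of the form $\bigl(\mathbf z,\ \mathbf w'+\mathbf r(\mathbf z,\mathbf w)\bigr)$, with $\mathbf r$ a vector of $k'$ polynomials, and take these to be the components of $\pi$. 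This is the main obstacle. When $\frk{p}$ is an ideal of $\frk{g}$ one may take $\mathbf r=0$ and $\pi$ is the coordinate projection forgetting $\mathbf w''$; but in general $\mathbf r$ is genuinely nonzero, since left translation by $\iota(P)$ mixes $\mathbf z$-dependent amounts of the other vertical coordinates into $\mathbf w''$ — for the Engel group, with $\frk{p}$ the (non-ideal) second layer, one is led to $\pi(\mathbf z,w_3,w_4)=(\mathbf z,\,w_4+\tfrac14\mathbf z\,w_3)$. What has to be extracted from Theorems~\ref{thm-embedding} and~\ref{thm-groupstructure} is exactly that $*$ is polynomial and triangular, which is what makes $\mathbf r$ polynomial and $\pi$ holomorphic and a submersion.

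Granting $\pi$, the rest is routine and follows by direct computation from the explicit polynomial forms of $\iota$ and $\pi$. The map $\bar\iota$ is injective, because $\pi(\iota(g))=\pi(\iota(g'))$ forces $\iota(g),\iota(g')$ into a common $\iota(P)$-coset, so $\iota(g'g^{-1})=\iota(g')*\iota(g)^{-1}\in\iota(P)$ and hence $g'g^{-1}\in P$ by injectivity of $\iota$. It is an immersion of rank $2n+k'=\dim(P\backslash G)$ — the $\mathbf z$-components of $\pi\circ\iota$ recover the horizontal coordinates, and the remaining $k'$ components, restricted to a transversal of $\frk{p}$ in $\frk{n}$, have everywhere invertible differential — and its image is a closed semialgebraic subset of $\C^{n+k'}$, once more the edge of a wedge of the type~\eqref{eq:domain} but with $k'$ defining inequalities, so $\bar\iota$ is a closed embedding. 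Finally, $P\backslash G$ carries the CR structure induced from that of $G$, which is legitimate precisely because $\frk{p}\cap\frk{h}=\{0\}$ as already observed; and since $\iota$ is CR while $\pi$ is holomorphic, $\pi\circ\iota$ carries $(0,1)$-vector fields on $G$ to $(0,1)$-vector fields on $\C^{n+k'}$, and this property descends through $q$, making $\bar\iota$ a CR embedding.
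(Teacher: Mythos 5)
Your overall architecture is the right one and is close in spirit to the paper's proof: the paper also uses that $\iota$ is a homomorphism into $(M,\ast)$ and defines the projected map $\tilde\Phi$ by the commutative square $\pi_2\circ\Phi=\tilde\Phi\circ\pi_1$, where $\pi_2\colon M\to\Phi(P_{\mathbb C})\backslash M$ is the canonical projection. But as written your argument has two genuine gaps. First, the existence of your polynomial projection $\pi$ is never established: you yourself flag the construction of the $n+k'$ polynomial first integrals as ``the main obstacle'' and offer only a heuristic (triangularity of $\ast$) plus the Engel example. The paper sidesteps this by not choosing coordinates at all: it observes that $\Phi(P_{\mathbb C})=P_{\mathbb C}$ is a \emph{complex} subgroup of $(M,\ast)$ on which $\ast$ coincides with the original product, so the coset space $\Phi(P_{\mathbb C})\backslash M$ is a complex manifold of dimension $n+k'$ and $\pi_2$ does the job of your $\pi$. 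If you insist on an explicit polynomial submersion you still owe a proof that this coset space admits a global polynomial holomorphic section (e.g.\ by exponentiating a complement of $\frk{p}_{\mathbb C}$ in $\frk{m}$); that step is missing.

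Second, and more seriously, your injectivity argument does not follow from the properties you impose on $\pi$. You require $\pi$ to be a holomorphic submersion constant on the left cosets of $\iota(P)$; those cosets are, as you note, totally real of real dimension $d=\dim P$, while the fibres of a holomorphic submersion $\C^{n+k}\to\C^{n+k'}$ have real dimension $2d$. Hence $\pi(\iota(g))=\pi(\iota(g'))$ only places $\iota(g),\iota(g')$ in a common fibre, not in a common $\iota(P)$-coset: the implication you assert is the converse of the one your hypotheses supply. The correct object to quotient by is the complex subgroup $\Phi(P_{\mathbb C})$ (equivalently, the fibres of $\pi$ should be exactly the $\Phi(P_{\mathbb C})$-cosets), and one must then check that $\iota(G)$ meets each such complex coset in at most one real coset $\iota(P)\ast\iota(g)$, i.e.\ that $G\cap P_{\mathbb C}gH^{01}=Pg$; this is where $\frk{p}\subset\frk{n}$, hence $\frk{g}\cap(\frk{p}_{\mathbb C}\oplus\frk{h}^{01})=\frk{p}$, actually enters. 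A minor further point: your closing claim that the image is the edge of a wedge of the form~\eqref{eq:domain} is not available — the paper emphasises that the polynomials in the embedding may be genuinely complex valued, so the image need not have that form.
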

We will show that interesting examples arise in the quotient context. In particular, we will see that a class of Levi-degenerate finite type hypersurfaces in $\mathbb C^2$ (see, e.g., \cite{Kolar}) arise by embedding suitable quotients of stratified groups.

Before proving our statements, we need some more notation and a few preliminary results.  
We consider the complexification $\frk{g}_{\mathbb C}$ of $\frk{g}$ and the corresponding Lie group $G_{\mathbb C}$. Notice that we may write $\frk{g}_{\mathbb C}$ as the vector space direct sum $\frk{g}_{\mathbb C}=\frk{h}^{10}\oplus  {\frk{h}^{01}} \oplus \frk{n}_{\mathbb C}$, where $\frk{h}^{10} = \overline{\frk{h}^{01}}$ and $\frk{n}_{\mathbb C}$ is the complexification of $\frk{n}$. 
The space $\frk{m} = {\frk{h}^{10}} \oplus \frk{n}_{\mathbb C}$ is a complex subalgebra of $\frk{g}_{\mathbb C}$.  The exponential map $ \exp$ from $\frk{g}_{\mathbb C}$ is surjective onto $G_{\mathbb C}$, and therefore is surjective from all subalgebras of $\frk{g}_{\mathbb C}$ onto a corresponding connected and simply connected subgroup of $G_{\mathbb C}$. Unless stated otherwise, we will identify $G_{\mathbb C}$ or its subgroups with  $\frk{g}_{\mathbb C}$ or a suitable subalgebra. Namely, $\forall p\in G_{\mathbb C}$, 
$$p=(x_1,\dots,x_{2n},x_{2n+1},\dots,x_{2n+k}) = \exp \left(\sum_{j=1}^{2n+k}x_jX_j\right).$$

We define the Lie groups $H^{01} = \exp (\frk{h}^{01})$ and $M = \exp(\frk{m})\simeq{\mathbb C}^{n+k}$.
Consider the canonical projection
$$
\pi: G^{\mathbb C} \to G^{\mathbb C}/H^{01}.
$$
Notice that the restriction of $\pi$ to $M$ is a holomorphic diffeomorphism onto $G^{\mathbb C}/H^{01}$ and let $\Psi$ be the inverse of this diffeomorphism. 
Then $\Phi:=\Psi\circ\pi: G^{\mathbb C}\to M$ is surjective. Denote by $\Phi_R^{-1}$  a right inverse of $\Phi$. We define a product $\ast$ on $M$ as follows. For every $m,m'\in M$, 
\begin{equation}\label{product-ast}
m\ast m' := \Phi\left(\Phi_R^{-1}(m)\Phi_R^{-1}(m')\right).
\end{equation}
In other words, $\ast$ is the product that makes $\Phi_R^{-1}$ a homomorphism.

\section{Theorems \ref{thm-embedding} and \ref{thm-groupstructure}}
%In our setting, let
%$$
%\frk{h}^{0,1} = \{X+iJX\, : \, X\in \frk{h}\}.
%$$
In this section, we prove Theorems \ref{thm-embedding} and \ref{thm-groupstructure}. The proof of Theorem~\ref{thm-embedding} is a reformulation of Naruki's argument~\cite{Naruki}. However, our approach is made simpler by the product $\ast$ on $M$.
We will also provide an example at the end of the section where $\frk{h}^{01}$ is not commutative, and compute an explicit embedding in this case.
We recall the statement of our first theorem for the reader's convenience.
%
%We consider the complexification $\frk{g}_{\mathbb C}$ of $\frk{g}$ and the corresponding Lie group $G_{\mathbb C}$. Notice that we may write $\frk{g}_{\mathbb C} =\frk{h}^{10}\oplus  {\frk{h}^{01}} \oplus \frk{n}_{\mathbb C}$, where $\frk{h}^{10} = \overline{\frk{h}^{01}}$ and $\frk{n}_{\mathbb C}$ is the complexification of $\frk{n}$, a vector space direct sum. 
%The space $\frk{m} = {\frk{h}^{01}} \oplus \frk{n}_{\mathbb C}$ is a complex subalgebra of $ \frk{g}_{\mathbb C}$.  We define the Lie groups $H^{10} = \exp (\frk{h}^{10})$ and $M = \exp(\frk{m})\simeq{\mathbb C}^{n+k}$, with
%$
%\exp: \frk{g}_{\mathbb C} \to G_{\mathbb C}
%$ bijective.
%Consider the canonical projection
%$$
%\pi: G^{\mathbb C} \to G^{\mathbb C}/H^{10}.
%$$
\begin{theorem*}
Every nilpotent Lie group with an integrable left-invariant horizontal CR structure of type $(n,k)$ admits a CR embedding $\iota$ into $\C^{n+k}$ of the form 
$$
({\bf x} + \I {\bf y}, {\bf t}) \mapsto ({\bf x} + \I {\bf y} + {\bf p}({\bf x},{\bf y}), {\bf t}+  {\bf q}({\bf x},{\bf y},{\bf t})),
$$
where ${\bf x},{\bf y}\in {\mathbb R}^n, {\bf u}\in {\mathbb R}^k$, ${\bf p}({\bf x},{\bf y})$ is a vector of $n$ (possibly complex valued) polynomials, and ${\bf q}({\bf x},{\bf y},{\bf t})$ is a vector of $k$ (possibly complex valued) polynomials.
%Moreover, if $G$ is stratified and $\mathfrak h$ coincides with its first layer, then there exists a group product on $\C^{n+k}$ such that the embedding $\Phi$ is a homomorphism\footnote{to be checked}.
\end{theorem*}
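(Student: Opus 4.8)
The plan is to construct the embedding $\iota$ as the composition $G \hookrightarrow G_{\C} \xrightarrow{\Phi} M \simeq \C^{n+k}$, where the first arrow is the canonical inclusion of the real group into its complexification and $\Phi = \Psi \circ \pi$ is the projection already defined in the preliminaries. Concretely, I would first fix the right inverse $\Phi_R^{-1}$ of $\Phi$ so that it agrees with the exponential coordinates on $G$ itself: since $\frk{g} \cap \frk{h}^{01} = \{0\}$ (the intersection is a subspace of $\frk{h}$ fixed by multiplication by $\I$ on $\frk{h}^{01}$, hence trivial), the projection $\pi$ restricted to $G \subset G_{\C}$ is injective, and $\Phi|_G$ is a diffeomorphism onto its image in $M$. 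This map is $\iota$. One then checks it is a CR embedding: the $(0,1)$ vectors on $G$ are spanned by the left-invariant fields $\livf{X} + \I J\livf{X}$ for $X \in \frk{h}$, which are exactly the left-invariant vector fields on $G_{\C}$ tangent to the subgroup $H^{01}$; since $\Phi$ kills precisely the $H^{01}$-direction and $M$ carries the complex structure transported from $G_{\C}/H^{01}$, the differential of $\iota$ sends these to $(0,1)$ vectors on $M$. This is essentially Naruki's argument repackaged via $\ast$.

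The substantive part is showing that in exponential coordinates the map $\iota$ has the stated polynomial form. I would argue as follows. Using the basis $\{X_1,\dots,X_{2n+k}\}$ with $\{X_1,\dots,X_{2n}\}$ spanning $\frk{h}$ and $\{X_{2n+1},\dots,X_{2n+k}\}$ spanning $\frk{n}$, write a point of $G$ as $\exp(\sum_{j\le 2n} a_j X_j + \sum_j t_j X_{2n+j})$, and record the real coordinates as $({\bf x}+\I{\bf y},{\bf t})$ after pairing up the horizontal coordinates according to $J$. To compute $\Phi$ of this point I must write $\exp(\sum a_j X_j + \sum t_j X_{2n+j})$ as a product $m \cdot h$ with $m \in M = \exp(\frk{h}^{10}\oplus\frk{n}_{\C})$ and $h \in H^{01}$, and read off $m$. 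Both factorisation and the readout are governed by the Baker–Campbell–Hausdorff formula, which terminates because $\frk{g}$ is nilpotent; hence every coordinate of $m$ is a polynomial in $({\bf x},{\bf y},{\bf t})$ with the \Malcev{}-type triangular structure. The key structural point is that $\frk{m} = \frk{h}^{10}\oplus\frk{n}_{\C}$ is a subalgebra and $\frk{h}^{01}$ is a subalgebra (the latter being exactly the integrability hypothesis \eqref{eq:integrable}), so the factorisation $\frk{g}_{\C} = \frk{m}\oplus\frk{h}^{01}$ is a factorisation into complementary subalgebras and the BCH recombination closes up. Tracking which variables enter which component: the $\frk{h}^{10}$-part of $m$ depends only on the horizontal variables ${\bf x},{\bf y}$ (plus corrections), giving the block ${\bf x}+\I{\bf y}+{\bf p}({\bf x},{\bf y})$, while the $\frk{n}_{\C}$-part can involve all variables, giving ${\bf t}+{\bf q}({\bf x},{\bf y},{\bf t})$; the leading linear terms are the identity because at first order $\frk{h} \to \frk{h}^{10}$ via $X \mapsto \tfrac12(X - \I JX)$ is a real-linear isomorphism which I normalise to the identity by choosing coordinates on $M$ appropriately.

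The main obstacle I anticipate is the bookkeeping that separates the horizontal block from the $\frk{n}$-block cleanly enough to conclude that ${\bf p}$ depends on $({\bf x},{\bf y})$ alone and not on ${\bf t}$. This requires using that $\frk{n}$ is an ideal, so that commutators involving an $\frk{n}$-factor stay in $\frk{n}_{\C}$ and never feed back into the $\frk{h}^{10}$-component during the BCH rearrangement; once that is isolated, the horizontal block is computed purely inside the subgroup generated by $\frk{h}_{\C}$, where only ${\bf x},{\bf y}$ appear. A secondary point needing care is injectivity and properness of $\iota$ as a map of real manifolds — but this follows because $\iota$ is, up to the linear normalisation, a polynomial map whose lowest-order term is the identity and which is equivariant for the $\ast$-product (the content of Theorem~\ref{thm-groupstructure}), so it is a bijection onto a closed subgroup. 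I would present the polynomiality via BCH first, then the type-$(0,1)$ verification, then injectivity, keeping the explicit example to illustrate the non-abelian $\frk{h}^{01}$ case afterwards.
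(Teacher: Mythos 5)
Your construction coincides with the paper's: $\iota=\Phi\big|_G$ with $\Phi=\Psi\circ\pi$, injectivity from $G\cap H^{01}=\{e\}$, and polynomiality via the Baker--Campbell--Hausdorff formula applied to the factorisation of $\frk{g}_{\C}$ into the complementary subalgebras $\frk{m}$ and $\frk{h}^{01}$; your remark that $\frk{n}$ being an ideal is what keeps ${\bf t}$ out of ${\bf p}$ is a useful detail that the paper delegates to \cite{Gre}. The one step where you diverge, and where your wording needs repair, is the verification that $\iota$ is CR. You assert that the $(0,1)$ fields $\livf{X}+\I J\livf{X}$ ``are exactly the left-invariant vector fields on $G_{\C}$ tangent to $H^{01}$'' and that ``$\Phi$ kills precisely the $H^{01}$-direction''; read literally, these two statements give $\iota_*(\livf{X}+\I J\livf{X})=0$, which would force $\iota_*\livf{X}=\iota_*\livf{JX}=0$ and contradict the fact that $\iota$ is an immersion. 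What is actually true is that under the complexified inclusion $\C TG\hookrightarrow \C TG_{\C}=T^{1,0}G_{\C}\oplus T^{0,1}G_{\C}$, the field $\livf{X}+\I J\livf{X}$ splits into a $(1,0)$-part, namely the holomorphic left-invariant field of $X+\I JX\in\frk{h}^{01}$, which is tangent to the fibres of $\pi$ and hence annihilated, and a $(0,1)$-part, the antiholomorphic field of $X-\I JX$, which survives and lands in $T^{0,1}$ of the complex quotient $G_{\C}/H^{01}$. The paper sidesteps this bookkeeping by computing directly with one-parameter subgroups: it shows that $\exp(tJX)$ and $\exp(\I tX)$ agree modulo $H^{01}$ up to $O(t^2)$ and then uses the $\ast$-homomorphism property of $\iota$ to conclude $\iota_*(\livf{JX})=\I\,\iota_*(\livf{X})$. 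Either route works once the splitting (or the explicit computation) is stated correctly; the rest of your argument matches the paper's.
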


\begin{proof}
Since $G\cap H^{01} = \{e\}$, the canonical projection $\pi: G^{\mathbb C} \to G^{\mathbb C}/H^{01}$
is injective when restricted to $G$ and so $\pi(G)$ is a submanifold of the complex manifold $G^{\mathbb C}/H^{01}$.  Let $\iota := \Phi\Big|_{G}:G\to M$, where $\Phi$ was defined at the end of the previous section. We show that $\iota$ is a homomorphism into $(M,\ast)$. Indeed, for every $g,g'\in G$,
\begin{equation}\label{iota-hom}
\iota(gg')=\iota(\iota^{-1}\iota(g)\iota^{-1}\iota(g'))=\iota(g)\ast\iota(g').
\end{equation}
We show that $\iota$ is a CR embedding.  
To this end, it suffices\footnote{Please Gerd confirm that this is indeed the case} to show that the differential $\iota_*$ satisfies $\iota_*(\livf{JX})=i\iota_*(\livf{X})$ for every $X\in \mathfrak h$. First, notice that 
\begin{align*}
\iota (\exp tJX) &=\iota\left(\exp\left(\frac{t}{2}(JX+iJ^2X)+\frac{t}{2}(JX-iJ^2X)\right)\right)\\
&=\iota\left(\exp\left(\frac{-it}{2}(X+iJX)+\frac{it}{2}(X-iJX)\right)\right)
\end{align*}
and 
\begin{equation*}
\exp\left(\frac{-it}{2}(X+iJX)+\frac{it}{2}(X-iJX)\right)\exp\left(\frac{it}{2}(X+iJX)\right)=\exp\left(\frac{it}{2}(X-iJX)+O(t^2)\right).
\end{equation*}
Similarly,
$$
\iota (\exp tiX) =\iota\left(\exp\left(\frac{it}{2}(X+iJX)+\frac{it}{2}(X-iJX)\right)\right)
$$
and 
\begin{equation*}
\exp\left(\frac{it}{2}(X+iJX)+\frac{it}{2}(X-iJX)\right)\exp\left(\frac{-it}{2}(X+iJX)\right)=\exp\left(\frac{it}{2}(X-iJX)+O(t^2)\right).
\end{equation*}
Hence, for every $X\in \mathfrak h$, every smooth function $f:M\to\mathbb R$ and $p\in G$,
\begin{align*}
\iota_*(\livf{JX}) (f)(\iota(p) )&= \livf{JX}(f\circ \iota)(p)\\
&= \frac{d}{dt} (f\circ \iota)(p \exp(tJX)) |_{t=0}\\
&= \frac{d}{dt} f(\iota(p)\ast\iota(\exp(tJX)) |_{t=0}\\
&=\frac{d}{dt} f\left(\iota(p)\ast\exp\left(\frac{it}{2}(X-iJX)\right) \right) |_{t=0}\\
&=\frac{d}{dt} f(\iota(p)\ast \iota(\exp i tX)) |_{t=0}\\
&=\frac{d}{dt} f(\iota(p\exp i tX)) |_{t=0}\\
&=i\livf{X}(f\circ \iota)(p)\\
&= i\,\iota_*(\livf{X}) (f)(\iota(p) ).
\end{align*}

Next, let $p=\exp(\sum_{j=1}^{2n+k} x_jX_j)$. Since $\frk{g}_{\mathbb C} =\frk{h}^{10}\oplus  {\frk{h}^{01}} \oplus \frk{n}_{\mathbb C}$ and $\frk{g}_{\mathbb C}$ is nilpotent, there exist polynomials $\psi_j$, $j=1,\dots,n+k$, and $\varphi_\ell$, $\ell=1,\dots,n$, in the variables ${\bf x}=(x_1,\dots,x_{2n+k})$ so that
$$
\exp(\sum_{j=1}^{2n+k} x_jX_j) = \exp\left(\sum_{j=1}^n\psi_j({\bf x})(X_j+iJ(X_j))+\sum_{\ell=n}^{n+k}\psi_\ell({\bf x})X_\ell\right)\exp\left(\sum_{j=1}^n\varphi_j({\bf x})(X_j-iJ(X_j))\right).
$$
Then $\Psi\circ \pi(g)=\exp\left(\sum_{j=1}^n\psi_j({\bf x})(X_j+iJ(X_j))+\sum_{\ell=n}^{n+k}\psi_\ell({\bf x})X_\ell\right)$. A careful analysis of the Baker-Campbell-Hausdorff formula and the nilpotency of $G$ show that the polynomials $\psi_j$ have the properties required. For more details, see \cite{Gre}.
\end{proof}

Theorem~\ref{thm-groupstructure} is a Corollary of Theorem~\ref{thm-embedding} and its proof.
\begin{theorem*}
Let $G$ be a nilpotent group with an integrable left-invariant horizontal CR structure of type $(n,k)$.
 %a stratified Lie group and $\mathfrak h$ be the first layer of the stratification of the Lie algebra of $G$. 
 Then there exists a product on $\C^{n+k}$ such that the embedding $\iota$ is a homomorphism from $G$ into $\C^{n+k}$. In other words, $\iota$ realises $G$ as a subgroup of $\C^{n+k}$.
\end{theorem*}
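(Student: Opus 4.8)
The plan is to read off the group law directly from the proof of Theorem~\ref{thm-embedding}. There we produced the map $\iota=\Phi|_{G}\colon G\to M$, where $M=\exp(\frk{m})$ is identified with $\C^{n+k}$ through the exponential coordinates appearing in the statement of Theorem~\ref{thm-embedding}, and \eqref{iota-hom} records that $\iota(gg')=\iota(g)\ast\iota(g')$ for the product $\ast$ of \eqref{product-ast}. Since $\iota$ is a CR embedding it is in particular injective, so as soon as we know that $(M,\ast)$ is a Lie group, $\iota(G)$ is a subgroup of $(\C^{n+k},\ast)$ isomorphic to $G$, which is exactly the assertion. Hence the whole content is to choose the right inverse $\Phi_R^{-1}$ in \eqref{product-ast} so that $\ast$ is genuinely associative with identity and inverses, and (in the spirit of the paper) polynomial.

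The key step is to realise $\ast$ as the transport of a \emph{bona fide} subgroup law of $G^{\C}$. Put $\frk{s}=\frk{h}\oplus\frk{n}_{\C}$. Since $\frk{n}$ is an ideal of $\frk{g}$, the relations $[\frk{h},\frk{h}]\subset\frk{g}\subset\frk{s}$, $[\frk{h},\frk{n}_{\C}]\subset\frk{n}_{\C}$ and $[\frk{n}_{\C},\frk{n}_{\C}]\subset\frk{n}_{\C}$ show that $\frk{s}$ is a real Lie subalgebra of $\frk{g}_{\C}$; it contains $\frk{g}=\frk{h}\oplus\frk{n}$, and a dimension count together with the observation that $X+iJX\in\frk{s}$ forces $JX\in\frk{h}\cap\frk{n}=\{0\}$ gives $\frk{g}_{\C}=\frk{s}\oplus\frk{h}^{01}$ with $\frk{s}\cap\frk{h}^{01}=\{0\}$. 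Let $S=\exp(\frk{s})\supset G$. Because $G^{\C}$ is connected, simply connected and nilpotent, the multiplication map $S\times H^{01}\to G^{\C}$ is a diffeomorphism; in particular $\pi|_{S}\colon S\to G^{\C}/H^{01}$ is a diffeomorphism, and hence so is $\Phi|_{S}=\Psi\circ\pi|_{S}\colon S\to M$. Taking $\Phi_R^{-1}:=(\Phi|_{S})^{-1}$ in \eqref{product-ast} yields a right inverse of $\Phi$ whose image $S$ is a subgroup, so $\ast$ is precisely the group law of $S$ carried over to $M$ by the diffeomorphism $\Phi|_{S}$; thus $(M,\ast)$ is a Lie group and $\Phi|_{S}\colon S\to(M,\ast)$ is an isomorphism. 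For $g\in G\subset S$ one has $\iota(g)=\Phi(g)=\Phi|_{S}(g)$, so $\iota$ is the composite of the subgroup inclusion $G\hookrightarrow S$ with this isomorphism; this re-derives \eqref{iota-hom} and exhibits $\iota(G)$ as a subgroup of $(M,\ast)\cong\C^{n+k}$ isomorphic to $G$. Finally, in the exponential coordinates the group law of $S$, the factorisation of an element of $G^{\C}$ along $S$ and $H^{01}$, and therefore $\ast$, are polynomial, by the Baker--Campbell--Hausdorff formula and the nilpotency of $\frk{g}_{\C}$.

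The main obstacle is precisely the group axioms for $\ast$ on all of $\C^{n+k}$: the naive choice $\Phi_R^{-1}=\mathrm{incl}_{M}$ does not work, because $M$ fails to normalise $H^{01}$ --- already for the Heisenberg group $[\frk{h}^{10},\frk{h}^{01}]\not\subset\frk{h}^{01}$ --- so associativity breaks. What rescues the argument is that the ideal $\frk{n}$ lets us enlarge $\frk{g}$ to the subalgebra $\frk{s}=\frk{h}\oplus\frk{n}_{\C}$, which is still transversal to $\frk{h}^{01}$; the corresponding subgroup $S$ then supplies the genuine group structure that is transported to $\C^{n+k}$. A secondary, routine point is checking that $\Phi|_{S}$ and its inverse are polynomial maps of $\C^{n+k}$, which is standard for simply connected nilpotent groups.
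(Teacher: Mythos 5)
Your proof is correct, and it is compatible with the paper's construction: like the paper, you obtain the statement by transporting a group law to $M\simeq\C^{n+k}$ through a right inverse of $\Phi$ as in \eqref{product-ast} and then invoking \eqref{iota-hom}. The paper's own proof is a one-line citation of \eqref{iota-hom}, which silently assumes two things: that $\Phi_R^{-1}$ can be chosen so that $\Phi_R^{-1}\circ\iota=\id_G$ (otherwise the middle expression in \eqref{iota-hom} is not $\iota(g)\ast\iota(g')$), and that $\ast$ is genuinely associative with identity and inverses, which is false for an arbitrary right inverse. Your contribution is to close both gaps at once by exhibiting the real subalgebra $\frk{s}=\frk{h}\oplus\frk{n}_{\C}$ of $\frk{g}_{\C}$, checking $\frk{g}_{\C}=\frk{s}\oplus\frk{h}^{01}$ and $G\subset S=\exp\frk{s}$, and taking $\Phi_R^{-1}=(\Phi|_S)^{-1}$, so that $\ast$ is literally the group law of $S$ carried over by a diffeomorphism and $\iota$ is the inclusion $G\hookrightarrow S$ in disguise. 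This is a genuine improvement in rigour over the printed proof, and it is consistent with the Remark after the proof, whose explicit $\Phi_R^{-1}$ in the homogeneous case takes values in exactly this $S$.

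One side remark in your last paragraph is inaccurate. The choice $\Phi_R^{-1}=\mathrm{incl}_M$ does \emph{not} break associativity: $\frk{m}=\frk{h}^{10}\oplus\frk{n}_{\C}$ is a subalgebra, so $M$ is a subgroup of $G_{\mathbb C}$ with $\Phi|_M=\id_M$, and \eqref{product-ast} then gives $m\ast m'=mm'$, the (associative) subgroup law of $M$. What actually fails for that choice is \eqref{iota-hom}: since $G\not\subset M$, one does not have $\Phi_R^{-1}(\iota(g))=g$, and $\Phi(gg')\neq\Phi(g)\Phi(g')$ in general (already for the Heisenberg group), so $\iota$ would not be a homomorphism into $(M,\cdot)$. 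The correct requirement on $\Phi_R^{-1}$, which your $S$ satisfies, is that its image be a subgroup of $G_{\mathbb C}$ transversal to $H^{01}$ and \emph{containing} $G$; normalising $H^{01}$ is not the relevant condition.
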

\begin{proof}
From~\eqref{iota-hom}, $\iota$ is a homomorphism and therefore it maps $G$ into a subgroup of $(M,\ast)$.
\end{proof}

 \begin{example}
Let $\frk{g}=\Span\{X_1, \dots, X_8\}$, with nontrivial commutators defined by
\[
[X_2,X_3]=X_5,\quad [X_2,X_4]=X_6,\quad [X_3,X_4]=X_7,\quad [X_3,X_7]=X_8,
\]
and let $G$ be the corresponding connected, simply connected nilpotent Lie group.
Choose $\frk{h}=\Span\{X_1, \dots, X_6\}$, $\frk{h}^{01}=\Span\{X_1+\I X_2, X_3+ \I X_4, X_5+ \I X_6\}$, and  $\frk{n}=\Span\{X_7, X_8\}$.
Since $[ X_1+\I X_2, X_3+ \I X_4] = \I(X_5+ \I X_6)$ and $X_5$ and $X_6$ are central, $\frk{h}^{01}$ is  a noncommutative subalgebra of $\frk{g}_{\C}$.
Evidently $\frk{n}$ is an ideal that complements $\frk{h}$.
Let $A=a_1(X_1+\I X_2) +a_2(X_3+\I X_4)+a_3(X_5+\I X_6)$, $B=b_1(X_1-\I X_2) +b_2(X_3-\I X_4)+b_3(X_5-\I X_6)$, and $C=c_1X_7+c_2X_8$, with $a_j,b_k,c_\ell\in \mathbb C$. We define $\Psi\circ \pi=\Phi:G_{\mathbb C}\to M$ by
$$
{p}=\exp(A+B+C) \mapsto \exp(A+B+C) \exp\left(-A-\frac{1}{2}ia_2b_1(X_5+iX_6)\right).
$$
A straightforward computation yields
$$
\Phi(p) = (b_1,b_2,b_3+\frac{i}{2}a_1b_2,c_7-ia_2b_2,c_8+a_2c_7-\frac{i}{3}a_2^2b_2-\frac{i}{6}a_2b_2^2)
$$
%\exp\left(A+C-\frac{i}{2}a_1b_2(X_5+iX_6)-ia_2b_2X_7+\frac{1}{2}c_1b_2X_8-\frac{1}{6}ia_2b_2(a_2+b_2)X_8\right)
%$$
The embedding of $G$ is then obtained by restricting $\Phi$ to $G$, i.e., by substituting $a_1=\frac{1}{2}(x_1-ix_2)$, $b_1=\frac{1}{2}(x_1+ix_2)$, $a_2=\frac{1}{2}(x_3-ix_4)$,
$b_2=\frac{1}{2}(x_3+ix_4)$, $a_3=\frac{1}{2}(x_5-ix_6)$, $b_3=\frac{1}{2}(x_5+ix_6)$, $c_1=x_7$, and $c_2=x_8$. The embedding is
\begin{align*}
(x_1,\dots,x_8) &\mapsto \left(\frac{1}{2}(x_1+ix_2), \frac{1}{2}(x_3+ix_4), \frac{1}{2}(x_5+ix_6)+\frac{i}{8}(x_1-ix_2)(x_3+ix_4),\right.\\
&\left.\hskip3cm x_7-\frac{i}{4}(x_3^2+x_4^2),x_8+\frac{1}{4}x_7(x_3-ix_4)
-\frac{1}{48}(x_4+3ix_3)(x_3^2+x_4^2)\right).
%-\frac{i}{24}(x_3-ix_4)(x_3^2+x_4^2)-\frac{i}{48}(x_3+ix_4)(x_3^2+x_4^2)\right).
\end{align*}

%%
%$$
%\exp\left(\sum_{j=1}^8 x_j X_j \right) {\rm mod} H^{10}
%%\exp\left(-\frac{1}{2}(\sum_{j=1}^6 u_j X_j+i\sum_{j=1}^6 u_j JX_j).\right)
%$$
%By computing explicitly $\exp\left(\sum_{j=1}^8 x_j X_j \right) \exp \left(\sum_{k=1}^6 x_k(X_k-iJX_k)\right)$, we find the embedding
%\begin{align*}
%(x_1,\dots,x_8) &\mapsto \left(\frac{1}{2}(x_1-ix_2),\frac{1}{2}(x_3-ix_4),\frac{1}{2}(x_5-ix_6)-\frac{1}{8}(x_1x_4+i(x_1x_3+x_2x_4)),\right. \\
%&\left.\hskip1.5cm x_7+\frac{i}{4}(x_3^2+x_4^2),x_8+\frac{1}{4}x_3x_7-\frac{1}{48}x_4(x_3^2+x_4^2)+\frac{i}{4}(x_4x_7+\frac{1}{4}x_3(x_3^2+x_4^2))\right).
%\end{align*}
%The component along the direction $X_5-iX_6$ is not linear.
\end{example}

\begin{remark}
In the case where the CR structure is homogeneous, we can write an explicit expression for $\Phi$ and $\Phi_R^{-1}$, which is convenient in calculations. 
%To prove  Theorem~\ref{thm-groupstructure}, suppose first that $G$ is stratified and $\mathfrak h$ coincides with $\mathfrak g_1$, the first layer  of its Lie algebra.
% In this case, we may choose $\mathfrak n= [\mathfrak g,\mathfrak g]$. 
We  write every $p\in G_{\mathbb C}$ as
$p=\exp(A+B+C)$, where $A\in \mathfrak h^{01}$, $B\in \mathfrak h^{10}$, and $C\in \mathfrak n_{\mathbb C}$.
Then $\Phi: G_{\mathbb C}\to M$ is defined by
$$\Phi(\exp(A+B+C))=\exp(A+B+C)\exp(-A).$$
The right inverse $\Phi_R^{-1}:M\to G_{\mathbb C}$ can be chosen to be 
$$\Phi_R^{-1}(\exp(B+C))=\exp(A+C)\exp(\bar B),$$
for every $B\in  \mathfrak h^{10}$ and $C\in \mathfrak n_{\mathbb C}$. 

%We define a product $\ast$ on $M$ by
%\begin{equation}\label{Mprod}
%m\ast m' := \Phi(\Phi_R^{-1}(m)\,\Phi_R^{-1}(m')).
%\end{equation}
%Notice that $\Phi_R^{-1}$ is a homomorphism from $(M,\ast)$ into $G_{\mathbb C}$.
%
%Next, let $\iota=\Phi \Big|_G$. This is the CR-embedding of $G$ into $M$ described in Theorem~\ref{thm-embedding}. In particular, $\iota$ is a diffeomorphism onto its image, and for every $g,g'\in G$,
%\begin{align*}
%\iota(gg') &= \iota \left( \iota^{-1}\circ \iota(g)\,\iota^{-1}\circ \iota(g') \right)\\
%&= \iota(g) \ast \iota(g').
%\end{align*}
%In other words, $\iota$ is a homomorphism that embeds $G$ onto a subgroup of $(M,\ast)$, as required.
%
%If $\mathfrak h$ does not coincide with $\mathfrak g_1$, then the same proof holds with $\Phi$ defined by the composition $\Psi\circ \pi$ introduced in the proof of Theorem~\ref{thm-embedding}. The right inverse $\Phi_R^{-1}$ exists and the conclusion follows in the same way. 
%\qed
%\begin{remark}
%We chose to exhibit the proof of Theorem~\ref{thm-groupstructure} in the stratified case because in this case $\Phi$ can be given explicitly, hence providing a constructive way to characterise the product on $M$.
\end{remark}

\vskip0.2cm
The product $\ast$ on $M$ that we defined in \eqref{product-ast} generalises a well known result for the Heisenberg group, as we show in the following example.
\begin{example}
Let $\mathfrak g={\rm span}\{X_1,X_2,X_3\}$ be the three dimensional Heisenberg algebra. The only non-zero bracket between the elements of the basis is set to be $[X_1,X_2]=X_3$. We choose the left-invariant CR structure defined by $\livf{X}_1 + \I \livf{X}_2$, with $\livf{X}_1$ and $\livf{X}_2$ denoting the left-invariant vector fields on the group $G=\exp(\mathfrak g)$ that coincide with $X_1$ and $X_2$ at the identity. 
Then $\mathfrak h={\rm span}\{X_1,X_2\}$, $\mathfrak h^{01}= {\rm span}\{X_1+iX_2\}$,
$\mathfrak h^{10}= {\rm span}\{X_1-iX_2\}$, and $\mathfrak n={\rm span}\{X_3\}$. After complexification, we see that $\mathfrak m = {\rm span}_{\mathbb C}\{X_1-iX_2,X_3\}\simeq \mathbb C^2$. Let $(z,w),(z',w')\in M\simeq \mathbb C^2$. From~\eqref{product-ast}
and a straightforward computation we obtain that
$$
(z,w)\ast(z',w') = (z+z',w+w'-2i\bar{z}z').
$$ 
This product is holomorphic from the left, in the sense that the mapping $$L_{(z,w)}:(z',w')\mapsto (z,w)\ast(z',w')$$ is holomorphic with respect to $\frac{\partial}{\partial \bar{z}'}$ and $\frac{\partial}{\partial \bar{w}'}$. If $p=\exp(x_1X_1+x_2X_2+x_3X_3)$,  the embedding $\iota : G \to \mathbb C^2$ is 
$$
(x_1,x_2,x_3)\mapsto \left(\frac{1}{2}(x_1+ix_2),x_3-\frac{i}{4}(x_1^2+x_2^2)\right).
$$
\end{example}

\section{Theorem~\ref{thm-quotients}}
Let $P\subset G$ be a subgroup such that its Lie algebra satisfies $\mathfrak p\subset \frk{n}$.
Denote by ${\mathfrak p}_{\mathbb C}$ and $P_{\mathbb C}$ the corresponding complexifications. We prove the following theorem.
\begin{theorem*}
Every CR-embedding of $G$ into $\C^{n+k}$ projects to an embedding of $P\backslash G$ into $\C^{n+k'}$, with $k'=k-{\rm dim}P$.
\end{theorem*}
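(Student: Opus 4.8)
The plan is to descend the embedding $\iota=\Phi|_{G}$ of Theorem~\ref{thm-embedding} along the projection $q\colon G\to P\backslash G$, after performing a matching holomorphic quotient of the target $M\simeq\C^{n+k}$. Set $P_{\C}:=\exp(\frk p_{\C})$, which is a closed connected complex subgroup of $M$ because $\frk p_{\C}$ is a complex subalgebra of $\frk m$ (as $\frk p\subset\frk n\subset\frk m$). The departure point is that $\Phi$ is equivariant for left translations by $P_{\C}$: if $x\in G_{\C}$ is written as $x=\Phi(x)h$ with $\Phi(x)\in M$ and $h\in H^{01}$, then for $p\in P_{\C}$ one has $px=(p\,\Phi(x))h$ with $p\,\Phi(x)\in M$, so $\Phi(px)=p\,\Phi(x)$, the product being that of the complex group $M$. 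In particular $\iota|_{P}=\id_{P}$ (since $\Phi|_{M}=\id_{M}$), and $\iota(pg)=p\,\iota(g)$ for $p\in P$, $g\in G$; thus $\iota$ intertwines the left $P$-action on $G$ with left multiplication by $P\subset P_{\C}$ inside $M$.

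Next I would construct the target. As $P_{\C}$ is a closed connected complex subgroup of the simply connected nilpotent complex Lie group $M$, the homogeneous space $P_{\C}\backslash M$ is biholomorphic to $\C^{\,n+k'}$, where its complex dimension is $\dim_{\C}M-\dim_{\C}P_{\C}=(n+k)-\dim P=n+k'$ with $k'=k-\dim P$; moreover, in exponential coordinates of the second kind through a Malcev basis of $\frk m$ adapted to $\frk p_{\C}$, the quotient map $\varpi\colon M\to P_{\C}\backslash M$ is polynomial (and when $\frk p$ is an ideal of $\frk g$ it is even a homomorphism onto the nilpotent group $M/P_{\C}$). By the equivariance above, $\varpi\circ\iota$ is constant on each coset $Pg$, hence factors as $\varpi\circ\iota=\bar\iota\circ q$ for a well-defined map $\bar\iota\colon P\backslash G\to P_{\C}\backslash M\simeq\C^{\,n+k'}$. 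It is polynomial: using exponential coordinates of the second kind on $G$ adapted to a Malcev basis through $\frk p$, the projection $q$ admits a polynomial section, so $\bar\iota$ is a composition of polynomial maps, of the shape of the embeddings in Theorems~\ref{thm-embedding} and \ref{thm-groupstructure}. It is a CR map because $\iota$ is CR, $\varpi$ is holomorphic, and the CR structure on $P\backslash G$ is $q_{*}$ of the one on $G$.

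The main point to verify — and I expect it to be the heart of the argument — is that $\bar\iota$ is injective with injective differential. For injectivity, suppose $\bar\iota(Pg)=\bar\iota(Pg')$, i.e. $P_{\C}\iota(g)=P_{\C}\iota(g')$; by the equivariance of $\Phi$ this means $g'\in P_{\C}\,g\,H^{01}$, say $g'=p_{\C}g h$ with $p_{\C}\in P_{\C}$, $h\in H^{01}$. I would then apply the antiholomorphic conjugation $\sigma$ of $G_{\C}$ with fixed-point set $G$: it preserves $P_{\C}=\exp(\frk p_{\C})$ but sends $H^{01}$ to $H^{10}=\exp(\frk h^{10})$, so $g'=\sigma(g')=\sigma(p_{\C})\,g\,\sigma(h)$, whence
\[
\sigma(h)h^{-1}=g^{-1}\bigl(\sigma(p_{\C})^{-1}p_{\C}\bigr)g\in g^{-1}P_{\C}\,g\subset N_{\C},
\]
using that $N_{\C}:=\exp(\frk n_{\C})$ is normal in $G_{\C}$. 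Projecting to $G_{\C}/N_{\C}$ and using that $\exp$ is bijective there, the images of $\sigma(\log h)$ and $\log h$ in $\frk g_{\C}/\frk n_{\C}$ coincide, i.e. $\sigma(\log h)-\log h\in\frk n_{\C}$; but $\sigma(\log h)-\log h\in\frk h_{\C}$ and $\frk h_{\C}\cap\frk n_{\C}=\{0\}$, so $\sigma(\log h)=\log h$, hence $\log h\in\frk h^{01}\cap\frk g=\{0\}$ and $h=e$. Then $g'=p_{\C}g$, so $g'g^{-1}=p_{\C}\in P_{\C}\cap G=P$ (the last equality because $\exp$ is bijective and $\frk p_{\C}\cap\frk g=\frk p$), i.e. $Pg'=Pg$. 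The injectivity of $d\bar\iota$ is the infinitesimal form of the same computation: it amounts to $\frk g\cap(\frk p_{\C}+\Ad(g)\frk h^{01})=\frk p$ inside $\frk g_{\C}$, which follows by applying $\sigma$ exactly as above, the cross term then lying in $\Ad(g^{-1})\frk n_{\C}=\frk n_{\C}$ and being killed by $\frk h_{\C}\cap\frk n_{\C}=\{0\}$. Combining, $\bar\iota\colon P\backslash G\to\C^{\,n+k'}$ is the desired polynomial CR embedding, with $k'=k-\dim P$.
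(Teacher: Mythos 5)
Your proposal is correct and follows the same overall route as the paper: descend $\iota=\Phi|_G$ by quotienting the source by $P$ and the target by (the image of) $P_{\C}$, and check that the CR property survives. The paper phrases the target quotient as $\Phi(P_\C)\backslash(M,\ast)$ and defines the descended map by a commutative diagram, while you quotient the complex group $M$ by the complex subgroup $P_\C$ directly; these agree because $p\ast m=pm$ for $p\in P_\C$, which is exactly the left $P_\C$-equivariance $\Phi(px)=p\,\Phi(x)$ you establish (and which is what makes the paper's diagram well defined). Where you go beyond the paper is in actually verifying that the descended map is an embedding: the paper only checks the CR identity $(\pi_2)_*\iota_*(\livf{JX})=i(\pi_2)_*\iota_*(\livf{X})$ and leaves injectivity and the rank condition implicit, whereas your argument via the antiholomorphic involution $\sigma$ — reducing $g'\in P_\C\,g\,H^{01}$ to $\sigma(h)h^{-1}\in N_\C$, then using $\frk h_\C\cap\frk n_\C=\{0\}$ and $\frk h^{01}\cap\frk g=\{0\}$ to force $h=e$ and $g'g^{-1}\in P_\C\cap G=P$, together with its infinitesimal analogue — supplies a complete proof of both. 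This is a genuine addition rather than a deviation; the only caveat is a harmless notational slip in the immersion criterion, where the relevant subspace should be $\frk g\cap\bigl(\Ad(g^{-1})\frk p_\C+\frk h^{01}\bigr)=\Ad(g^{-1})\frk p$, but your $\sigma$-argument proves exactly this statement.
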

\begin{proof}

Since $\mathfrak p\cap \mathfrak h=\{0\}$,  the map $\Phi=\Psi\circ \pi$ is injective when restricted to $P_{\mathbb C}$. In fact, $\Phi(P_{\mathbb C})=P_{\mathbb C}$ is a subgroup of $(M,\ast)$ and $p\ast p' = pp'$ for all $p,p'\in 
P_{\mathbb C}$.
Define 
$$
\tilde{\Phi}: P_{\mathbb C} \backslash G_{\mathbb C} \to \Phi(P_{\mathbb C}) \backslash M
$$
by the requirement that the following diagram commutes
\begin{center}
\begin{tikzcd}
    G_{\mathbb C} \arrow{r}{\Phi} \arrow{d}[swap]{\pi_1} & M \arrow{d}{\pi_2} \\
    P_{\mathbb C} \backslash G_{\mathbb C} \arrow{r}[swap]{\tilde{\Phi}} &  \Phi(P_{\mathbb C}) \backslash M
\end{tikzcd}
\end{center}
where $\pi_1$ and $\pi_2$ denote the canonical projections. Then the restriction of $\tilde{\Phi}$ to $P\backslash G$ is a CR embedding.
Indeed, recalling that $\iota_*(\livf{JX})=i\iota_*(\livf{X})$ and since $H^{01}\cap P=\{e\}$, we conclude that
$$
\left(\tilde{\Phi}\Big|_G \right)_* (\pi_1)_* (\livf{JX})= (\pi_2)_*\left({\Phi}\Big|_G \right)_*(\livf{JX}) = (\pi_2)_*\iota_*(\livf{JX})= i(\pi_2)_*\iota_*(\livf{X})
$$

\end{proof}

\begin{example}

Consider the 
%The vector fields $X_1$ and $X_2$, for $x_1x_2\neq 0$, form 
$6$-dimensional  Lie algebra 
$\mathfrak{g}={\rm span}\{X_1,\dots,X_6\}$ where the nontrivial brackets are given by
$$
X_3=[X_2,X_1], \quad X_4=[X_3,X_1], \quad X_5=[X_3,X_2],\quad [X_4,X_1]=[X_5,X_2]=8X_6.
$$
Denote by $G$ the connected and simply connected Lie group with Lie algebra $\mathfrak{g}$. 
%Define a left-invariant CR structure on $G$ by ${\rm span}\{X_1+iX_2\}$.
 %and compute the left-invariant vector fields on G corresponding to $X_1$ and $X_2$. 
 We use  exponential coordinates of the second kind on $G$
$$
{\bf x} = (x_1,x_2,x_3,x_4,x_{5},x_6):= \exp\left(\sum_{j=3}^{5}x_jX_j\right)\exp\left(x_1X_1+x_2X_2\right)
\exp\left(x_6 X_6\right).$$
The left-invariant vector fields on $G$ corresponding to $X_1$ and $X_2$ in these coordinates are
$$
\livf{X}_1 =  \partial_{x_1}+ \frac{x_2}{2}\partial_{x_3}-\frac{1}{3}x_1x_2 \partial_{x_4}-\frac{1}{3}x^2_2 \partial_{x_5}+x_2(x_1^2+x_2^2) \partial_{x_6}
$$
and 
$$
\livf{X}_2 =  \partial_{x_2}- \frac{x_1}{2}\partial_{x_3}+\frac{1}{3}x^2_2 \partial_{x_4}+\frac{1}{3}x_1x_2 \partial_{x_5}-x_1(x_1^2+x_2^2) \partial_{x_6}.
$$
Then $\livf{X}_1+i\livf{X}_2$ defines a left-invariant CR-structure on $G$.
In the complexified group $G_{\mathbb C}$, we write a point as $\exp(C)\exp(A+B)$, with $A = a(X_1+iX_2)$, $B=b(X_1-iX_2)$,  $C=\sum_{j=3}^6 c_j X_j$, and $a,b,c_j\in \mathbb C$. A straightforward computation shows that 
\begin{align*}
\Phi\left(\exp(C)\exp(A+B)\right)&= \exp(C)\exp(A+B)\exp(-A)\\
&= \exp\left((c_3+iab)X_3 +(c_4-\frac{i}{3}ab(a+2b))X_4+(c_5+\frac{1}{3}ab(a-2b)X_5\right)\times\\
&\qquad \times \exp(b(X_1-iX_2)) \exp(c_6+4ia^2b^2X_6).
\end{align*}
Then the embedding $\iota$ of $G$  in  $M\simeq \mathbb C^5$ is obtained by substituting in the formula above $a=(x_1-ix_2)/2$, $b=(x_1+ix_2)/2$, and $c_j=x_j$. This yields
\begin{align*}
\iota:(x_1,\dots,x_6)&\mapsto \left(\frac{1}{2}(x_1+ix_2),x_3 + \frac{i}{4}(x_1^2 + x_2^2),x_4 - \frac{i}{24}(x_1^2+x_2^2)(3x_1+ix_2),\right.\\
&\hskip5cm\left. x_5 - \frac{1}{24}(x_1^2+x_2^2)(x_1+3ix_2), x_6+\frac{i}{4}(x_1^2 + x_2^2)^2\right).
\end{align*}

Next, let $\mathfrak{p}={\rm span}\{X_2,\dots,X_5\}$, $P=\exp \mathfrak{p}$ and $P_{\mathbb C}$ its complexification. The quotient $P\backslash G$ is a three dimensional CR manifold where points can be identified with $(x_1,x_2,x_6)\in\mathbb R^3$. Moreover, we see that $\Phi$ restricted to $P_{\mathbb C}$ is the identity. Therefore, we may compute $\tilde{\Phi}$ applied to $P\backslash G$ and obtain that
%
%$$
%\pi_2\circ \Phi \left(\exp(C)\exp(A+B)\right) = \exp(a(X_1+iX_2)) \exp(c_6-4ia^2b^2X_6)).
%$$
%The restriction of $\pi_2\circ \Phi$ to $G$ gives the embedding of $P\backslash G$ into 
%$P_{\mathbb C} \backslash G_{\mathbb C}$, which we identify with $\mathbb C^2$. Hence, we obtain the embedding
$$
(x_1,x_2,x_6)\mapsto  \left(\frac{1}{2}(x_1-ix_2),x_6-\frac{i}{4}(x_1^2 + x_2^2)^2\right),
$$
which shows that the Levi degenerate manifold 
$$
\Omega= \left\{(z,w)\in {\mathbb C}^2: {\rm Im}w = -\frac{1}{4}(x_1^2+x_2^2)^{2}\right\},
$$
is CR equivalent to a left quotient of a Lie group by a subgroup. Here $z=x_1+ix_2$ and $x_6= \Re w$.
The CR structure on $\Omega$  is defined by $L=({\pi_2})_*(\livf{X}_1)+i({\pi_2})_*(\livf{X}_2)=\tilde{X}_1+i\tilde{X}_2$ with
$$ \tilde{X}_1= \partial_{x_1}+x_2(x_1^2+x_2^2) \partial_{x_6}, $$
$$ \tilde{X}_2= \partial_{x_2} -x_1(x_1^2+x_2^2) \partial_{x_6}. $$
A similar construction can be replicated for the CR manifolds 
$$
\Omega_k= \left\{(z,w)\in {\mathbb C}^2: {\rm Im}w = -\frac{1}{2k}(x_1^2+x_2^2)^{k}\right\},
$$
for all $k\geq 2$. In this general case, we construct $G$ as follows. We observe that $\tilde{X}_1+i\tilde{X}_2$ with
$$ \tilde{X}_1= \partial_{x_1}+x_2(x_1^2+x_2^2)^{k-1} \partial_{x_N}, $$
$$ \tilde{X}_2= \partial_{x_2} -x_1(x_1^2+x_2^2)^{k-1} \partial_{x_N} $$
defines a CR structure on the manifold $\Omega_k$, where $z=x_1+ix_2$ and $\Re w =x_N$. Hence, we define $\mathfrak g$ to be the Lie algebra generated by $\tilde{X}_1$ and $\tilde{X}_2$ in the case that $x_1^2+x_2^2\neq 0$. These examples are constructed more explicitly in~\cite{ChangLOW}.
\end{example}

\section{Afterthoughts}
The\footnote{this section can be deleted altogether or edit at leisure} embedding that we constructed in the proof of Theorem~\ref{thm-embedding} is not the only CR embedding that we can define using exponential coordinates of the first kind.
In fact, in many  examples, the embedding in the statement of Theorem~\ref{thm-embedding} can be constructed in a form where ${\bf p}={\bf 0}$, and ${\bf q}$
is a vector of real valued polynomials, that is, in the form given in \eqref{eq:domain}. In~\cite{CLOW} the authors present an algorithm to compute a CR embedding of the form \eqref{eq:domain} for a stratified group with a homogeneous CR structure,  under the assumption that such an embedding exists. They then show that this does exist for all free Lie algebras with two generators and up to step $8$.
The following example confirms that the algorithm in~\cite{CLOW} and that of  Theorem~\ref{thm-embedding} may have different outcomes.

\begin{example}
Let $\mathfrak g=\Span\{X_1,X_2,X_3,X_4,X_5\}$ be the free nilpotent Lie algebra of step three with two generators. The non-zero brackets are
$$
[X_1,X_2]=X_3,\quad [X_1,X_3]=X_4,\quad [X_3,X_2]=X_5.
$$
We define the CR structure on $G=\exp\mathfrak g$ induced by $L=X_1+iX_2$.
From~\cite{CLOW}, it follows that an embedding of the form~\eqref{eq:domain} exists (with $\mathfrak h=\mathfrak g_1$). However, if we follow the construction of Theorem~\ref{thm-embedding}, we see that $\Phi:G_{\mathbb C}\to \mathbb C^4$ is
\begin{align*}
\exp(A+B+C) &\mapsto \exp\left(A+(c_3-abi)X_3+ (c_4+\frac{ac_3}{2}-\frac{i}{3}a^2b-\frac{i}{6}ab^2)X_4\right.\\
&\hskip6cm+\left. (c_5-\frac{i}{2}ac_3+\frac{1}{6}ab^2-\frac{1}{3}a^2b)X_5\right)
\end{align*}
where $A=a(X_1+iX_2)$, $B=b(X_1-iX_2)$, and $C= \sum_{j=3}^5 c_jX_j$. The embedding is found by taking the restriction of $\Phi$ to $G$. Namely, we apply $\Phi$ to $a=\frac{x_1-\I x_2}{2}$, $b=\frac{x_1+\I x_2}{2}$, and $c_j = x_j$, for $j=3,4,5$. We see immediately that the coefficient of $X_4$, for example, is 
$$x_4+ \frac{1}{4}x_1x_3-\frac{i}{4}x_2x_3 -\frac{i}{24}(x_1-ix_2)(x_1^2+x_2^2)-\frac{i}{48}(x_1+ix_2)(x_1^2+x_2^2),
$$
 which is not of the form \eqref{eq:domain}.
\end{example}
This example shows that, at least in some cases, we can find a better form for the embedding expressed in exponential coordinates. It would be interesting to characterise the classes of nilpotent groups with an integrable left-invariant CR structure for which an embedding of the form \eqref{eq:domain} exists.

\end{document}